\def\thmsection{section}
\def\thmchangesection{changesection}
\def\thmchangechapter{changechapter}
\def\thmchange{change}
\def\thmplain{plain}
  \theoremstyle{break-italic}
  \newtheorem{satz}{Satz}
    \theoremstyle{break-italic}
    \newtheorem{satz}{Satz}[section]
      \theoremstyle{break-italic}
      \newtheorem{satz}{Satz}
         \theoremstyle{break-italic}
         \newtheorem{satz}{Satz}[section]
           \theoremstyle{break-italic}
           \newtheorem{satz}{Satz}[chapter]
             \theoremstyle{break-italic}
             \newtheorem{satz}{Satz}[section]
            \theoremstyle{break-italic}
            \newtheorem{satz}{Satz}[section]
\theoremstyle{break-italic}
\newtheorem{theorem}[satz]{Theorem}
\newtheorem*{conjecture*}{Conjecture}
\theoremstyle{break-roman}
\newtheorem{remark}[satz]{Remark}
\newtheorem{conjecture}[satz]{Conjecture}
\theoremstyle{standard}
\theoremstyle{varthm-roman}
\newtheorem*{varthm-roman}{}
\theoremstyle{varthm-italic}
\newtheorem*{varthm-italic}{}
\theoremstyle{varthm-roman-break}
\newtheorem*{varthm-roman-break}{}
\theoremstyle{varthm-italic-break}
\newtheorem*{varthm-italic-break}{}
\theoremstyle{varthm-roman-no-punctuation}
\newtheorem{varthm-roman-no-punctuation-numbered}[satz]{}
\theoremstyle{varthm-italic-no-punctuation}
\newtheorem{varthm-italic-no-punctuation-numbered}[satz]{}
\newenvironment{varthm-roman-numbered}[1]{
  \begin{varthm-roman-no-punctuation-numbered}
    \mbox{\rm\textbf{#1}}
  }{\end{varthm-roman-no-punctuation-numbered}}
\newenvironment{varthm-italic-numbered}[1]{
  \begin{varthm-italic-no-punctuation-numbered}
    \mbox{\rm\textbf{#1}}
  }{\end{varthm-italic-no-punctuation-numbered}}
\newenvironment{varthm-roman-break-numbered}[1]{
  \begin{varthm-roman-no-punctuation-numbered}
    \mbox{\rm\textbf{#1}\newline}
  }{\end{varthm-roman-no-punctuation-numbered}}
\newenvironment{varthm-italic-break-numbered}[1]{
  \begin{varthm-italic-no-punctuation-numbered}
    \mbox{\rm\textbf{#1}}\newline
  }{\end{varthm-italic-no-punctuation-numbered}}
\numberwithin{equation}{section}
\newtheorem{proposition}{Proposition}[section]
\def\eex{{\accent"5E e}\kern-.385em\raise.2ex\hbox{\char'23}\kern-.08em}
\def\EES{{\accent"5E E}\kern-.5em\raise.8ex\hbox{\char'23 }}
\def\ow{o\kern-.42em\raise.82ex\hbox{
\vrule width .12em height .0ex depth .075ex \kern-0.16em \char'56}\kern-.07em}
\def\OW{O\kern-.460em\raise1.36ex\hbox{
\vrule width .13em height .0ex depth .075ex \kern-0.16em \char'56}\kern-.07em}
\def\Tr{{\rm Tr\,}}
\begin{document}

\title[Two trace inequalities for operator functions]{Two trace inequalities for operator functions}

\author{Trung Hoa Dinh}
\address{Trung Hoa Dinh, Department of Mathematics, Troy University, Troy, AL 36082, United States}
\email{thdinh@troy.edu}

\author{Toan Minh Ho }
\address{Minh Toan Ho, Institute of Mathematics, VAST, 18 Hoang Quoc Viet, Hanoi, Vietnam}
\email{hmtoan@math.ac.vn}

\author{Cong Trinh Le }
\address{Cong Trinh Le, Department of Mathematics, Quy Nhon University, 170 An Duong Vuong, Quy Nhon, Binh Dinh, Vietnam}
\email{lecongtrinh@qnu.edu.vn}

\author{Bich Khue Vo }
\address{Bich Khue Vo, University of Finance-Marketing, 2/4 Tran Xuan Soan, District 7, Ho Chi Minh City, Vietnam}
\email{votbkhue@gmail.com}

\subjclass[2010]{46L51, 47A30}

\date{\today}


\keywords{Unitarily invariant norms;  trace inequalities;  operator monotone functions;  operator convex functions}

\begin{abstract} In this paper we show that  for a non-negative operator monotone function $f$  on $[0, \infty)$ such that $f(0)= 0$ and for any positive semidefinite matrices $A$ and $B$,
\begin{equation*}\label{main2}
\Tr((A-B)(f(A)-f(B))) \le \Tr(|A-B|f(|A-B|)).
\end{equation*}
When the function $f$ is operator convex on $[0, \infty)$, the inequality is reversed.
\end{abstract}

\maketitle

\section{Introduction}

        For arbitrary nonnegative numbers $a \ge b$ and $p \ge 2$,
$$ 
a^{p-1} - b^{p-1} \ge (a-b)^{p-1}.
$$
Multiplying both sides of this inequality by $(a-b)$ we get
\begin{equation}\label{scalar}
    (a-b)(a^{p-1}-b^{p-1}) \ge (a-b)^p.
\end{equation}
For $p \in [1, 2]$, the inequality (\ref{scalar}) is reversed. Namely, we have
\begin{equation}\label{scalar2}
    (a-b)(a^{p-1}-b^{p-1}) \le (a-b)^p.
\end{equation}
From (\ref{scalar}) it implies that for $f, g \in  L_p(\Omega, \mu)$ (where $(\Omega, \mu)$ is some measure space),
\begin{equation}\label{func}
    \int_\Omega (f(x) - g(x)) (f(x)^{p-1} - g(x)^{p-1}) d\mu \ge  \int_\Omega (f(x) - g(x))^p d\mu.\end{equation}
In \cite{mus}, Mustapha Mokhtar-Kharroubi pointed out that this inequality may be used to get contractivity on the positive cone of $L_p(\Omega, \mu)$. Recently, Ricard   \cite{ricard} proved a non-commutative version of (\ref{func}) for von Neumann algebras. His result if translated into the language of matrices states that for $p \ge 2$ and for any $A, B \ge0$, 
\begin{equation}\label{ric}
\Tr((A-B)(A^{p-1} - B^{p-1})) \ge \Tr(|A-B|^p).
\end{equation}
Although the last inequality holds true, it is not obvious that the left hand side part is non-negative for any positive semidefinite matrices $A$ and $B$. That fact can be proved by using the Klein inequality \cite{carlen} which states that for a differentiable convex function $f$ on $(0, \infty)$,
$$ 
\Tr(f(A) - f(B)) \ge \Tr((A-B)f'(B)).
$$
Applying the Klein inequality for $t^p$ with $p \ge 1$ we obtain
$$ 
\Tr(A^p - B^p) \ge p\Tr((A-B)B^{p-1})
$$
and
$$
\Tr(B^p - A^p) \ge p\Tr((B-A)A^{p-1}).
$$
From the last two inequalities, we get
$$ 
\Tr(((A-B)(A^{p-1} - B^{p-1})) \ge 0.
$$

Recall a famous inequality for unitarily invariant norm  due to Ando \cite{ando}: For $p \ge 1$ and for any unitarily invariant norm $||| \cdot |||$, 
$$
||| A^p - B^p ||| \ge ||||A-B|^p|||.$$
Applying the above inequality for the trace norm $||A||_1 = \Tr(|A|),$ we obtain  
\begin{equation}\label{andoine}
\Tr(|A^p - B^p|) \ge \Tr(|A-B|^p).
\end{equation}

The inequality (\ref{ric}) attracts our attention because of the following reason: it provides an interpolation of the mentioned above Ando inequality.
\begin{proposition}\label{prop} Let $p \ge 2$. Then for any positive semidefinite matrices $A$ and $B$,
\begin{equation}\label{interpolation}
\Tr(|A^p - B^p|) \ge \Tr((A-B)(A^{p-1} - B^{p-1})) \ge \Tr(|A-B|^p).
\end{equation}
\end{proposition}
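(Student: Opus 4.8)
The right-hand inequality is precisely Ricard's inequality \eqref{ric}, so the whole task is the left-hand bound $\Tr|A^p-B^p| \ge \Tr((A-B)(A^{p-1}-B^{p-1}))$. The plan is to interpolate through the auxiliary quantity $\Tr(|A-B|(A^{p-1}+B^{p-1}))$, mimicking the scalar factorization $a^p-b^p=(a-b)\sum_{k=0}^{p-1}a^kb^{p-1-k}\ge (a-b)(a^{p-1}+b^{p-1})\ge (a-b)(a^{p-1}-b^{p-1})$, valid for $a\ge b\ge0$. That is, I would establish the two bounds
\[
\Tr|A^p-B^p|\ \ge\ \Tr(|A-B|(A^{p-1}+B^{p-1}))\ \ge\ \Tr((A-B)(A^{p-1}-B^{p-1})),
\]
the second of which is elementary and the first of which carries the real content.

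The second bound follows at once from the operator inequalities $|A-B|\ge \pm(A-B)$. Since $|A-B|-(A-B)\ge0$ and $A^{p-1}\ge0$, the trace of their product is nonnegative, giving $\Tr(|A-B|A^{p-1})\ge \Tr((A-B)A^{p-1})$; likewise $|A-B|+(A-B)\ge0$ and $B^{p-1}\ge0$ yield $\Tr(|A-B|B^{p-1})\ge -\Tr((A-B)B^{p-1})$. Adding the two and recombining the right-hand sides gives exactly $\Tr(|A-B|(A^{p-1}+B^{p-1}))\ge \Tr((A-B)(A^{p-1}-B^{p-1}))$.

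For the first bound I would use the duality $\Tr|X|=\max\{\Tr(YX): Y=Y^*,\ \|Y\|\le1\}$ together with the splitting identity
\[
A^p-B^p=\tfrac12\big[(A-B)(A^{p-1}+B^{p-1})+(A+B)(A^{p-1}-B^{p-1})\big],
\]
which holds for all $A,B$ because the cross terms $AB^{p-1}$ and $BA^{p-1}$ cancel. Taking $W=\operatorname{sgn}(A-B)$, so that $W=W^*$, $\|W\|\le1$ and $W(A-B)=|A-B|$, duality gives $\Tr|A^p-B^p|\ge \Tr(W(A^p-B^p))$, and the identity rewrites the latter as
\[
\Tr(W(A^p-B^p))=\tfrac12\Tr(|A-B|(A^{p-1}+B^{p-1}))+\tfrac12\Tr(W(A+B)(A^{p-1}-B^{p-1})).
\]
Feeding the second bound into the first summand, the proof of $\Tr|A^p-B^p|\ge \Tr((A-B)(A^{p-1}-B^{p-1}))$ reduces to the single inequality
\[
\Tr\big((W(A+B)-(A-B))(A^{p-1}-B^{p-1})\big)\ \ge\ 0.
\]

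This last inequality is where I expect the main difficulty. When $A$ and $B$ commute it is transparent: diagonalizing simultaneously, $W(A+B)-(A-B)$ becomes $2\min(a,b)\operatorname{sgn}(a-b)$ and $A^{p-1}-B^{p-1}$ becomes $\operatorname{sgn}(a-b)|a^{p-1}-b^{p-1}|$, so the integrand equals $2\min(a,b)|a^{p-1}-b^{p-1}|\ge0$ eigenvalue by eigenvalue. In the noncommutative setting neither factor is sign-definite and $W$ fails to commute with $A^{p-1}-B^{p-1}$, so this ``dropping of the positive middle terms'' is not available termwise; I would attempt to recover it either through the integral (Daleckii--Krein) representation of $A^{p-1}-B^{p-1}$ along the segment $B+t(A-B)$, or by exploiting the block structure of $W=\operatorname{sgn}(A-B)$ on the spectral subspaces of $A-B$. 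Controlling this residual term against the non-sign-definiteness introduced by $W$ is the crux of the argument.
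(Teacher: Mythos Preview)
Your reduction is clean up to the point you yourself flag: the residual inequality
\[
\Tr\big((W(A+B)-(A-B))(A^{p-1}-B^{p-1})\big)\ \ge\ 0,\qquad W=\operatorname{sgn}(A-B),
\]
is left unproved, and the two attacks you sketch (a Daleckii--Krein integral along the segment, or the block structure of $W$) are plans rather than arguments. This is a genuine gap, not bookkeeping: neither factor is sign-definite, $W$ does not commute with $A+B$ or with $A^{p-1}-B^{p-1}$, and (writing $W(A+B)-(A-B)=W\big((A+B)-|A-B|\big)$ when $A-B$ is invertible) the matrix $(A+B)-|A-B|$ is \emph{not} positive semidefinite for general $A,B\ge0$, so there is no obvious positivity to lean on.

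The paper avoids this difficulty entirely. Its proof of the left-hand inequality (valid in fact for every $p\ge1$) is a two-line application of the Powers--St{\o}rmer inequality $\Tr(X+Y-|X-Y|)\le 2\,\Tr(X^sY^{1-s})$ with $X=A^p$, $Y=B^p$, $s=1/p$: this gives $\Tr(A^p+B^p-|A^p-B^p|)\le 2\,\Tr(AB^{p-1})$, and by symmetry also $\le 2\,\Tr(BA^{p-1})$; averaging and rearranging yields
\[
\Tr|A^p-B^p|\ \ge\ \Tr\big(A^p+B^p-AB^{p-1}-BA^{p-1}\big)\ =\ \Tr\big((A-B)(A^{p-1}-B^{p-1})\big).
\]
An alternative one-liner, also recorded in the paper, combines H\"older with Ando's norm inequality: $\Tr((A-B)(A^{p-1}-B^{p-1}))\le \|A-B\|_p\,\|A^{p-1}-B^{p-1}\|_{p/(p-1)}\le \|A^p-B^p\|_1^{1/p}\cdot\|A^p-B^p\|_1^{(p-1)/p}=\|A^p-B^p\|_1$. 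Either route sidesteps the sign-indefinite residual term completely.
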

\begin{proof}
We prove the first inequality in (\ref{interpolation}) for any $p \ge 1$. In order to do that, let us recall the famous Powers-St\o rmer inequality in quantum hypothesis testing theory \cite{aud}: For any $A, B \ge 0$ and for any $s\in [0, 1]$,
\begin{equation}\label{powers}
\Tr(A+B - |A-B|) \le 2\Tr(A^sB^{1-s}).
\end{equation}
Applying (\ref{powers}) for $A^p$ and $B^p$ and for $s = 1/p$, we have
\begin{equation}\label{power1}
\Tr(A^p +B^p -|A^p-B^p|) \le 2\Tr(AB^{p-1}).
\end{equation}
Since $A$ and $B$ play the same role in the Powers-St\o rmer inequality, we also have
\begin{equation}\label{power2}
\Tr(A^p +B^p -|A^p-B^p|) \le 2\Tr(BA^{p-1}).
\end{equation}
From (\ref{power1}) and (\ref{power2}) we have
$$
\Tr(A^p +B^p -|A^p-B^p|) \le \Tr(AB^{p-1}) + \Tr(BA^{p-1}),
$$
or, 
$$
\Tr(|A^p - B^p|) \ge \Tr((A-B)(A^{p-1} - B^{p-1})).
$$
\end{proof}
\begin{remark} \rm  During the preparation of this paper, we received a comment from Dr. Ricard via a private communication. He provided a nice proof for the inequality (\ref{interpolation}) as follows. Recall that the Schatten $p$-norm is defined as $||A||_p = (\Tr(|A|^p))^{1/p}$. For $p\ge 1$, using Ando's inequality with  $\theta = 1/p$ and $\theta = (p-1)/p$, we get
$$
||A-B||_p \le ||A^p - B^p||_1^{1/p}, \quad ||A^{p-1} - B^{p-1}||_{p/(p-1)} \le ||A^p - B^p||_1^{p/(p-1)},
$$
where $A$ and $B$ are assumed to be positive semidefinite.
Consequently, 
\begin{align*}
\Tr((A-B)(A^{p-1} - B^{p-1})) & \le ||A-B||_p\cdot ||A^{p-1}-B^{p-1} ||_{p/(p-1)} \\
& \le ||A^p-B^p||_1. 
\end{align*}
\end{remark}

Now we should mention that the inequality (\ref{scalar}) is reversed when $p\in [1,2].$ Therefore, it is natural to ask whether the corresponding inequality holds for matrices.

At the same time, Ricard also gave us a short proof of the following inequality: For $p \in [1, 2]$ and for any positive semidefinite matrices $A$ and $B$, 
\begin{equation}\label{main1}
\Tr(|A-B|^p) \ge \Tr((A-B) (A^{p-1} - B^{p-1}))
\end{equation}
Indeed, 
\begin{align*}
\Tr((A-B)(A^{p-1} - B^{p-1})) &\le ||A-B||_p\cdot ||A^{p-1}-B^{p-1}||_{p/(p-1)} \\
&\le ||A-B||_p\cdot ||A-B||_p^{p-1} \\
& = \Tr(|A-B|^p),
\end{align*}
where we used the H\"older inequality in the first inequality, and the Ando inequality for $0 < \theta = p-1 <1$ and  $q \ge \theta$ as $|| A^\theta - B^\theta||_{q/\theta} \le ||A-B||_q^\theta$.

In this paper  we establish a generalization of (\ref{main1}) for operator monotone functions. Also the inequality (\ref{interpolation}) holds for operator convex functions instead of power functions $t^p$. 

\section{Main inequalities}
We should mention that for $p \in [1,2]$ the function $t^{p-1}$ is operator monotone  on $[0, \infty)$. Therefore, from  the inequality (\ref{main1}) it is interesting to know whether  the following inequality is true
$$
\Tr((A-B)(f(A)-f(B))) \le \Tr(|A-B|f(|A-B|)) 
$$
for some operator monotone  function $f$ under some conditions.

Based on the integral representation of operator monotone functions and operator convex functions we can establish a direct generalization of (\ref{main1}) for operator monotone functions on $[0, \infty)$.

\begin{theorem}
Let $f$ be a non-negative operator monotone function on $[0, \infty)$ such that $f(0)= 0$. Then for any positive semidefinite matrices $A$ and $B$,
\begin{equation}\label{main2}
\Tr((A-B)(f(A)-f(B))) \le \Tr(|A-B|f(|A-B|)).
\end{equation}
\end{theorem}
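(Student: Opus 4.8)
The plan is to peel the theorem down to a single one–parameter family of ``building blocks'' via the L\"owner integral representation, and then to attack each block by simultaneous diagonalization. First I would invoke the representation of a non-negative operator monotone $f$ on $[0,\infty)$ with $f(0)=0$,
\[
f(t)=\beta t+\int_0^\infty g_\lambda(t)\,d\mu(\lambda),\qquad g_\lambda(t)=\frac{\lambda t}{\lambda+t},\ \ \beta\ge 0,\ \mu\ge 0.
\]
Both sides of \eqref{main2} are linear in $f$ (the right side because $f(|A-B|)$ depends linearly on $f$, the left because $f(A)-f(B)$ does), so it suffices to prove the inequality for $f(t)=t$ and for each kernel $g_\lambda$ and then integrate against $d\mu$ and add $\beta$ times the linear case. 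For $f(t)=t$ both sides equal $\Tr((A-B)^2)$, so that case is an equality; the whole content is thus the family $g_\lambda$.

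For the block I would pass to resolvents and then diagonalize. Writing $C=A-B$ and $R_X=(\lambda+X)^{-1}$, the identity $g_\lambda(X)=\lambda-\lambda^2R_X$ together with the resolvent identity $R_ACR_B=R_B-R_A$ gives $g_\lambda(A)-g_\lambda(B)=\lambda^2R_ACR_B$, so the left-hand side becomes $\lambda^2\Tr(CR_ACR_B)\ge 0$. Diagonalizing $A=\sum_i a_i|u_i\rangle\langle u_i|$ and $B=\sum_j b_j|v_j\rangle\langle v_j|$ and putting $q_{ij}=|\langle u_i|v_j\rangle|^2$ — a doubly stochastic matrix — the divided-difference expansion of $g_\lambda(A)-g_\lambda(B)$ yields
\[
\Tr\big((A-B)(g_\lambda(A)-g_\lambda(B))\big)=\sum_{i,j}q_{ij}\,\frac{\lambda^2(a_i-b_j)^2}{(\lambda+a_i)(\lambda+b_j)},
\]
whereas $\Tr(|C|g_\lambda(|C|))=\sum_k\phi_\lambda(\gamma_k)$, with $\gamma_k$ the singular values of $C$ and $\phi_\lambda(s)=\lambda s^2/(\lambda+s)$. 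The elementary inequality $\lambda(\lambda+|a-b|)\le(\lambda+a)(\lambda+b)$, valid for $a,b\ge 0$ because it rearranges to $2\lambda\min(a,b)+ab\ge 0$, bounds each summand; this is exactly where the hypothesis $A,B\ge 0$ enters. After this bound the block reduces to
\[
\sum_{i,j}q_{ij}\,\phi_\lambda(|a_i-b_j|)\le\sum_k\phi_\lambda(\gamma_k).
\]

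The displayed inequality is the heart of the matter, and I expect it to be the genuinely hard point: it compares a doubly-stochastic average of $\phi_\lambda$ over all eigenvalue gaps $|a_i-b_j|$ with the sum of $\phi_\lambda$ over the singular values of $A-B$. It cannot follow from convexity of $\phi_\lambda$ alone — the analogous statement is already false for $\phi(s)=s^3$, as a $2\times 2$ example shows — so the argument must use that $\phi_\lambda$ is \emph{operator} convex (equivalently, that $s\mapsto sg_\lambda(s)=\phi_\lambda(s)$ inherits operator convexity from the operator monotonicity of $g_\lambda$). I would establish it by combining the operator convexity of $\phi_\lambda$ with a Lidskii–Wielandt type majorization linking the gaps $a_i-b_j$ to the spectrum of $A-B$; equivalently one keeps the left side in the Hilbert–Schmidt form $\lambda^2\|R_A^{1/2}CR_B^{1/2}\|_2^2=\lambda^2\Tr(C(R_B-R_A))$ and compares the singular values of $\lambda R_A^{1/2}CR_B^{1/2}$ with $(\phi_\lambda(\gamma_k))_k$. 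Once the block is settled, integrating against $d\mu$ and adding the linear term recovers \eqref{main2}; the operator convex reverse statement would run along the identical scheme, built instead on the kernel $t^2/(\lambda+t)=t-g_\lambda(t)$ with every inequality reversed.
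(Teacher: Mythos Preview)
Your reduction to the one-parameter family $g_\lambda$ via the L\"owner representation is the right opening, and the resolvent computation of the left side is correct. But the proposal halts exactly at the crucial point: after the elementary scalar bound you are left with
\[
\sum_{i,j}q_{ij}\,\phi_\lambda(|a_i-b_j|)\ \le\ \sum_k\phi_\lambda(\gamma_k),
\]
and you do not prove it. Invoking ``operator convexity of $\phi_\lambda$ together with a Lidskii--Wielandt type majorization'' is a hope, not an argument: there is no standard majorization between the $n^2$ weighted gaps $(q_{ij},|a_i-b_j|)$ and the $n$ singular values $\gamma_k$ that would feed such a conclusion, and your own remark that the analogue fails for $\phi(s)=s^3$ shows the mechanism must be delicate. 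The alternative suggestion of comparing singular values of $\lambda R_A^{1/2}CR_B^{1/2}$ with $(\phi_\lambda(\gamma_k))_k$ is equally unfinished. Worse, by applying the scalar bound first you have replaced the true left-hand side by something \emph{larger}, so the residual inequality you now need is at least as strong as the original one for $g_\lambda$; you have not simplified the problem.

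The paper's proof bypasses this obstacle by never diagonalizing. For the ordered case $A\ge B$ (so $C=A-B\ge 0$) it stays at the operator level: from $(B+s)^{-1}\le s^{-1}I$ and $(B+C+s)^{-1}\le (C+s)^{-1}$ together with trace positivity one gets directly
\[
s^2\,\Tr\!\big(C(B+s)^{-1}C(B+C+s)^{-1}\big)\ \le\ s\,\Tr\!\big(C^2(C+s)^{-1}\big)=\Tr\big(C\,g_s(C)\big).
\]
For general $A,B$ the key device is to set $Z=A+C_-=B+C_+$, so that $A\le Z$ and $B\le Z$; a four-term splitting of $\Tr((A-B)(f(A)-f(B)))$ then reduces everything to two instances of the ordered case (producing $\Tr(C_-f(C_-))+\Tr(C_+f(C_+))=\Tr(|C|f(|C|))$) plus two cross terms that are $\le 0$ by operator monotonicity of $f$. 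This $Z$-decomposition is precisely the idea your proposal is missing.
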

\begin{proof}
It is well-known (\cite{zhan})  that for any operator monotone function $f$ on $[0, \infty)$ there exists a positive measure $\mu$ on $[0, \infty)$ such that
$$
f(t) = \alpha +\beta t + \int_0^\infty \frac{st}{s+t}d \mu(s),
$$
where $\alpha =f(0)$ and $\beta \ge 0$. By the assumption of the theorem, $\alpha =0.$ Now, suppose that $A \ge B$ and put $C=A-B$. First we mention that 
\begin{equation}\label{identity}
    (B+s)^{-1} -(B+C+s)^{-1} = (B+s)^{-1}C(B+C+s)^{-1}.
\end{equation}
Therefore, we have
\begin{align*}
\Tr(A-B)(f(A)-f(B)) & =\Tr(\beta C^2) + \int_0^\infty \Tr(s^2C((B+s)^{-1} -(B+C+s)^{-1})) d\mu(s) \\ 
& = \Tr(\beta C^2) +\int_0^\infty \Tr(s^2C((B+s)^{-1}C(B+C+s)^{-1})) d\mu(s) \\
& \le \Tr(\beta C^2) + \int_0^\infty \Tr(sC^2(C+s)^{-1}) d\mu(s) \\ 
& = \Tr(Cf(C)),
\end{align*}
where the inequality follows from the fact that for any $s> 0$, $(B+s)^{-1} \le s^{-1}$ and $(B+C+s)^{-1} \le (C+s)^{-1}$, and  the positivity of $\Tr(XY) \ge 0$ for positive semidefinite matrices $X$ and $Y$.

In general, denote by $C_-$ and $C_+$ the negative and positive parts of $C$, respectively. Then we have $|A-B| = C_- +C_+,$ and $A-B=C_+ - C_-$. Put $Z=A+C_- = B + C_+.$ Then we have
\begin{align*}\label{8}
    \Tr((A-B)(f(A)-f(B))) & =  \Tr((A-Z)(f(A)-f(Z)))  + \Tr((A-Z)(f(Z)-f(B))) + \\ \nonumber
     & \quad + \Tr((Z-B)(f(Z)-f(B))) +\Tr((Z-B)(f(A)-f(Z))).   \end{align*}
     Using the fact that the function $f$ is operator monotone and $A, B \le Z$, one can see the second and the fourth terms in the last identity are negative. According to the previous case, we have
     \begin{align*}
              \Tr((A-Z)(f(A)-f(Z))) + \Tr((Z-B)(f(A)-f(Z))) & \le \Tr(C_-f(C_-))+\Tr(C_+f(C_+)) \\
              &= \Tr(|C|f(|C|)).
          \end{align*}
\end{proof}

\begin{remark} \rm  Combining inequality (\ref{main1}) with Ando's inequality, we have
$$
\Tr(|A^p-B^p|) \ge \Tr(|A-B|^p) \ge \Tr((A-B) (A^{p-1} - B^{p-1})).
$$
If we compare the last inequality with the inequality  (\ref{powers}) it turns out that the last one is an interpolation of the Powers-St\o rmer inequality for the power $s$ in $[1/2, 1]$. 
\end{remark}

Now let us give a generalization of Ricard's result for operator convex functions. 

\begin{theorem}
Let $f$ be a non-negative operator convex function on $[0, \infty)$ such that $f(0)= 0$. Then for any positive semidefinite matrices $A$ and $B$,
\begin{equation}\label{main3}
\Tr((A-B)(f(A)-f(B))) \ge \Tr(|A-B|f(|A-B|)).
\end{equation}
\end{theorem}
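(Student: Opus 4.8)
The plan is to mirror the proof of the preceding theorem, replacing the integral representation of operator monotone functions by that of operator convex functions. Recall that a function $f$ is operator convex on $[0,\infty)$ with $f(0)=0$ precisely when it admits a representation
$$
f(t) = \beta t + \gamma t^2 + \int_0^\infty \frac{t^2}{s+t}\, d\mu(s),
$$
where $\beta \in \r$, $\gamma \ge 0$ and $\mu$ is a positive measure on $(0,\infty)$ (see, e.g., \cite{zhan}). Both sides of (\ref{main3}) are linear in $f$: the map $f \mapsto \Tr((A-B)(f(A)-f(B)))$ is linear, and so is $f \mapsto \Tr(|A-B|f(|A-B|))$ by linearity of the functional calculus and of the trace. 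Hence, after justifying the interchange of trace and integral (which is routine once one restricts the spectra of $A$, $B$ and $|A-B|$ to a common compact set), it suffices to verify (\ref{main3}) for each of the elementary functions $f(t)=t$, $f(t)=t^2$ and $f(t)=\frac{t^2}{s+t}$, since the weights $\gamma$ and $d\mu(s)$ are non-negative.

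First I would dispose of $f(t)=t$: here $f(A)-f(B)=A-B$, so the left-hand side equals $\Tr((A-B)^2)$, while $|A-B|f(|A-B|)=|A-B|^2$ gives the same value on the right. Thus (\ref{main3}) holds with equality, and the sign of $\beta$ is irrelevant. The heart of the argument is the building block $g_s(t)=\frac{t^2}{s+t}$, for which the key observation is the algebraic identity
$$
g_s(t) = t - \frac{st}{s+t},
$$
writing $g_s$ as the affine function $t$ minus the operator monotone function $h_s(t)=\frac{st}{s+t}$ (non-negative, vanishing at $0$) that already appeared in the previous proof. Since (\ref{main3}) holds with equality for $t$, and since the previous theorem applied to $h_s$ gives $\Tr((A-B)(h_s(A)-h_s(B))) \le \Tr(|A-B|h_s(|A-B|))$, subtracting reverses the inequality in exactly the right direction and yields $\Tr((A-B)(g_s(A)-g_s(B))) \ge \Tr(|A-B|g_s(|A-B|))$. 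This is precisely where the opposite sign of the present conclusion, compared with the monotone case, originates.

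It remains to treat $f(t)=t^2$, which I expect to be the main obstacle, as it is the one block not directly reducible to the previous theorem. Writing $C=A-B=C_+-C_-$ with $C_\pm \ge 0$ and $C_+C_-=0$, and $Z=A+C_-=B+C_+$ (so that $Z\ge A$ and $Z\ge B$), a direct expansion should give
$$
\Tr((A-B)(A^2-B^2)) - \Tr(|A-B|^3) = 2\big(\Tr(AC^2)-\Tr(C_+^3)\big).
$$
Using $C_+C_-=0$ one reduces $\Tr(AC^2)=\Tr(AC_+^2)+\Tr(AC_-^2)$ to $\Tr(ZC_+^2)+\Tr(AC_-^2)$; then $Z\ge C_+\ge 0$ forces $\Tr(ZC_+^2)\ge \Tr(C_+^3)$, while $\Tr(AC_-^2)\ge 0$ since $A\ge 0$, closing the estimate. (Alternatively, $t^2=\lim_{s\to\infty}\frac{st^2}{s+t}$ realizes $t^2$ as a limit of non-negative multiples of the blocks $g_s$, so (\ref{main3}) for $t^2$ would follow from the $g_s$ case by a continuity argument on the compactified spectrum.) Assembling the three cases with the weights $\beta$ (equality), $\gamma\ge 0$ and $d\mu\ge 0$ then yields (\ref{main3}).
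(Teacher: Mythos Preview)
Your argument is correct, but it follows a genuinely different route from the paper. The paper does \emph{not} invoke the operator monotone theorem at all: it repeats the same two-step template (first the ordered case $A\ge B$ via the resolvent identity and the bounds $(B+s)^{-1}\le s^{-1}$, $(B+C+s)^{-1}\le(C+s)^{-1}$; then the general case via the auxiliary matrix $Z=A+C_-=B+C_+$), carrying the entire integral representation $f(t)=\beta t+\gamma t^2+\int \frac{st^2}{s+t}\,d\mu(s)$ through the computation in one piece and checking directly, again with the integral representation and $C_+C_-=0$, that the two cross terms in the $Z$-decomposition have the right sign.

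Your approach instead isolates the building blocks and exploits the algebraic identity $\dfrac{t^2}{s+t}=t-\dfrac{st}{s+t}$ to write each block as ``identity minus an operator monotone function,'' so that the convex inequality becomes an immediate corollary of the monotone one by subtraction. This is shorter and makes the reversal of the inequality conceptually transparent; it also dispenses with the ordered-case/$Z$-decomposition machinery for the integral part (you only need a small ad hoc computation, or your limit argument, for the single block $t^2$). The paper's approach, by contrast, is self-contained and does not rely on the previous theorem, at the cost of redoing essentially the same estimates. Both arguments ultimately rest on the same resolvent bounds, but yours packages them through the already-proved result.
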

\begin{proof}
It is well-known (\cite{zhan}) that  for any operator convex function $f$ on $[0, \infty)$ there exists a positive measure $\mu$ on $[0, \infty)$ such that
$$
f(t) = \alpha +\beta t + \gamma t^2 +\int_0^\infty \frac{st^2}{s+t}d \mu(s),
$$
where $\alpha$ and $\beta$ are real and and $\gamma \ge 0$. By the assumption of the theorem, $\alpha = 0.$ Now, suppose that $A \ge B$ and put $C=A-B$. Therefore, 
\begin{align*}
\Tr((A-B)(f(A)-f(B))) & =\Tr(\beta C^2 + \gamma C((B+C)^2-B^2)) + \\
& \quad + \int_0^\infty s\Tr(C(C + s^2(B+C+s)^{-1} -s^2(B+s)^{-1})) d\mu(s) \\ 
&=\Tr(\beta C^2 + \gamma C(C^2+BC+CB)) + \\  
& \quad + \int_0^\infty s\Tr(C^2 - s^2C(B+C+s)^{-1}C(B+s)^{-1})) d\mu(s) \\ 
& \ge \Tr(\beta C^2 + \gamma C^3) + \int_0^\infty s\Tr(C^2 - sC^2(C+s)^{-1})) d\mu(s) \\ 
& = \Tr\left(C\left(\beta C + \gamma C^2 + \int_0^\infty s(C -s+s^2(C+s)^{-1}) d\mu(s)\right)\right) \\ 
& = \Tr(Cf(C)),
\end{align*}
where the inequality follows from the fact that for any $s> 0$, $(B+s)^{-1} \le s^{-1}$ and $(B+C+s)^{-1} \le (C+s)^{-1}$, and the positivity of $\Tr(XY) \ge 0$ for positive semidefinite matrices $X$ and $Y$.

In general, denote by $C_-$ and $C_+$ the negative and positive parts of $C$, respectively. Then we have $|A-B| = C_- +C_+,$ and $A-B=C_+ - C_-$. Put $Z=A+C_- = B + C_+.$ Then we have
\begin{align*}\label{8}
    \Tr((A-B)(f(A)-f(B))) & =  \Tr((A-Z)(f(A)-f(Z)))  + \Tr((A-Z)(f(Z)-f(B))) + \\ \nonumber
     & \quad + \Tr((Z-B)(f(Z)-f(B))) +\Tr((Z-B)(f(A)-f(Z))).   \end{align*}
 According to the previous case, we have
     \begin{align*}
              \Tr((A-Z)(f(A)-f(Z))) + \Tr((Z-B)(f(A)-f(Z))) & \ge \Tr(C_-f(C_-))+\Tr(C_+f(C_+)) \\
              &= \Tr(|C|f(|C|)).
          \end{align*}
To finish the proof, we need to show that the second and the fourth terms are positive. We again use the integral representation of operator convex functions and the fact that $C_-C_+ = 0$. We have
\begin{align*}
\Tr((A-Z)(f(Z)-f(A))) & = -\Tr(C_-(f(B+C_+)-f(B))) \\
& = -\Tr(\beta C_-C_+ + \gamma C_-((B+C_+)^2-B^2)) + \\
& \quad - \int_0^\infty s\Tr(C_-(C_+ + s^2(B+C_++s)^{-1} -s^2(B+s)^{-1})) d\mu(s) \\ 
& =  \int_0^\infty s^3\Tr(C_-((B+s)^{-1} - (B+C_++s)^{-1})) d\mu(s)  \\ 
& \ge 0.
\end{align*}
Similarly, we also have that the fourth term is positive. Thus, we finish the proof.
\end{proof}

To finish the paper, we would like to mention that the Ando inequality \cite{ando} was proved for general operator monotone functions and operator convex functions. Therefore, the following conjecture is natural.

\begin{conjecture} Let $||| \cdot|||$ be an arbitrary unitarily invariant norm and $f$ an operator monotone function on $[0, \infty)$ such that $f(0)=0$. Then for any positive matrices $A$ and $B$,
$$
||| (A-B)(f(A)-f(B))||| \le ||||A-B|f(|A-B|)|||.
$$

Also, the above inequality is reversed for an operator convex function $f$ on $[0, \infty)$ such that $f(0)=0$.
\end{conjecture}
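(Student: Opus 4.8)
The plan is to reduce the conjecture, via the Ky Fan dominance theorem, to a weak majorization between the singular values of the two sides, and then to assemble that majorization from two classical ingredients: Horn's multiplicative singular-value inequality and the general (operator monotone / operator convex) form of Ando's inequality invoked just above. Set $C=A-B$ and let $\lambda_1\ge\lambda_2\ge\cdots\ge\lambda_n\ge 0$ be the eigenvalues of $|C|$, so $\lambda_j=s_j(A-B)$ (singular values in decreasing order). Since $f\ge 0$ is nondecreasing with $f(0)=0$, the matrix $|A-B|f(|A-B|)$ is positive semidefinite with eigenvalues $\lambda_j f(\lambda_j)$; because $t\mapsto tf(t)$ is nondecreasing these are already sorted, whence $s_j(|A-B|f(|A-B|))=\lambda_j f(\lambda_j)$. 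By Ky Fan dominance it therefore suffices to prove, for every $k$,
$$\sum_{j=1}^{k} s_j\big((A-B)(f(A)-f(B))\big)\ \le\ \sum_{j=1}^{k}\lambda_j\, f(\lambda_j).$$

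For the operator monotone case I would proceed in three steps. First, apply Horn's weak (indeed logarithmic) majorization to the product of $X=A-B$ and $Y=f(A)-f(B)$, obtaining $\sum_{j\le k}s_j(XY)\le\sum_{j\le k}s_j(X)s_j(Y)=\sum_{j\le k}\lambda_j\, s_j(f(A)-f(B))$. Second, invoke the general Ando inequality for the operator monotone function $f$, namely $|||f(A)-f(B)|||\le|||f(|A-B|)|||$, which by Ky Fan is equivalent to the weak majorization $s(f(A)-f(B))\prec_w (f(\lambda_j))_j$; thus the tail sums $S_m=\sum_{j\le m}s_j(f(A)-f(B))$ satisfy $S_m\le T_m:=\sum_{j\le m}f(\lambda_j)$. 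Third, since the weights $\lambda_j$ are nonincreasing, a one-line Abel summation upgrades this to the weighted bound $\sum_{j\le k}\lambda_j\,s_j(f(A)-f(B))\le\sum_{j\le k}\lambda_j f(\lambda_j)$: one writes $\sum_{j\le k}\lambda_j(S_j-S_{j-1})=\lambda_k S_k+\sum_{j<k}(\lambda_j-\lambda_{j+1})S_j\le\lambda_k T_k+\sum_{j<k}(\lambda_j-\lambda_{j+1})T_j$, using $\lambda_j-\lambda_{j+1}\ge 0$ and $S_j\le T_j$. Chaining the first and third bounds yields the required majorization, and hence the norm inequality. A pleasant feature is that \emph{no} reduction to the case $A\ge B$ is needed here, since all three ingredients hold for arbitrary positive semidefinite $A,B$.

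For the operator convex case the general Ando inequality reverses, $(g(\lambda_j))_j\prec_w s(g(A)-g(B))$, and the target inequality reverses as well: one must establish the lower bound $\sum_{j\le k}\lambda_j\, g(\lambda_j)\le\sum_{j\le k}s_j\big((A-B)(g(A)-g(B))\big)$. This is exactly where the method breaks, and I expect it to be the crux. Horn's inequality produces only \emph{upper} bounds on $\sum_{j\le k}s_j(XY)$, whereas now a lower bound is needed, and there is no multiplicative lower majorization in terms of the \emph{largest} singular values (the anti-Horn inequalities involve the smallest ones). The endpoint $k=n$ is safe—it follows from the already-proved trace inequality (\ref{main3}) together with $\Tr|X|\ge\Tr X$—but the intermediate Ky Fan sums demand a genuinely different idea. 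Two routes I would attempt are: (i) the duality $|||XY|||=\sup\{|\Tr(XYW)|:|||W|||^{*}\le 1\}$, choosing a test matrix $W$ adapted to the spectral decomposition of $|A-B|$ so as to recover $\sum\lambda_j g(\lambda_j)$ from below; and (ii) using that an operator convex $g$ with $g(0)=0$ factors as $g(t)=t\,h(t)$ with $h$ operator monotone, in the hope of transferring the convex statement to a monotone one for $h$. Neither is routine, and obtaining the reversed Ky Fan inequalities for all intermediate $k$ is the principal obstacle in the conjecture.
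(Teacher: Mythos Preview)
The statement you are addressing is a \emph{conjecture} in the paper; the authors do not offer a proof of either direction, so there is nothing in the paper to compare against except the trace case proved in their two theorems.

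Your treatment of the operator monotone direction is correct and in fact settles that half of the conjecture. The three ingredients chain cleanly: Horn's log-majorization $\prod_{j\le k}s_j(XY)\le\prod_{j\le k}s_j(X)s_j(Y)$ implies the weak majorization $\sum_{j\le k}s_j(XY)\le\sum_{j\le k}\lambda_j\,s_j(f(A)-f(B))$ because the product sequence $s_j(X)s_j(Y)$ is nonincreasing; Ando's general theorem for operator monotone $f$ with $f(0)=0$ gives $S_m\le T_m$ for all $m$; and your Abel summation with the nonincreasing weights $\lambda_j$ correctly upgrades the unweighted partial-sum bounds to the weighted ones. Combined with Ky Fan dominance and the observation that $s_j\big(|A-B|f(|A-B|)\big)=\lambda_jf(\lambda_j)$ (since $t\mapsto tf(t)$ is nonnegative and nondecreasing), this yields $|||(A-B)(f(A)-f(B))|||\le||||A-B|f(|A-B|)|||$ for every unitarily invariant norm. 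This is strictly stronger than what the paper proves for the monotone case, which is only the trace-norm endpoint obtained via the integral representation and the decomposition $Z=A+C_-=B+C_+$.

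Your diagnosis of the operator convex direction is also accurate: the obstruction is genuine. Horn's inequality only gives upper control of $\sum_{j\le k}s_j(XY)$, and the available ``anti-Horn'' bounds involve the smallest singular values, so the method does not reverse. The factorization $g(t)=t\,h(t)$ with $h$ operator monotone is valid but does not obviously help, because one would still need a \emph{lower} bound on Ky Fan sums of a product. The duality approach via test matrices $W$ is plausible for the full trace ($k=n$), where indeed the paper's Theorem on operator convex functions already gives the answer, but producing a single $W$ that certifies an intermediate Ky Fan sum from below is exactly the missing idea. In short: you have proved the monotone half, and the convex half remains open, just as the paper leaves it.
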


\bigskip

\section*{Acknowledgments} We would like to express our sincere thanks to Dr. \'{E}ric Ricard for his comments and  valuable discussions that helped us to improve the preliminary version of the paper.

The second author is partially supported by Vietnam National Foundation for Science and Technology Development (NAFOSTED), grant no. 101.04-2017.12. \par The fourth  author is partially supported by Vietnam National Foundation for Science and Technology Development (NAFOSTED), grant no. 101.02-2017.310.


\end{document}